\theoremstyle{plain}
\newtheorem{THM}{Theorem}[section]
\newtheorem*{THM*}{Theorem}
\newtheorem{CLAIM}[THM]{Claim}
\theoremstyle{definition}
\newtheorem*{HJ}{Hales--Jewett theorem}
\theoremstyle{definition}
\newtheorem{QUE}[THM]{Question}
\theoremstyle{remark}
\title{\vspace{-22pt} Intervals in the Hales--Jewett theorem}
 \author{David Conlon\thanks{Mathematical Institute, Oxford OX2 6GG, United Kingdom. Email: \href{david.conlon@maths.ox.ac.uk} {\nolinkurl{david.conlon@maths.ox.ac.uk}}. Research supported by a Royal Society University Research Fellowship and by ERC Starting Grant 676632.} 
 \and Nina Kam\v{c}ev\thanks{Department of Mathematics, ETH, 8092 Z\"urich, Switzerland. Email: \href{mailto:nina.kamcev@math.ethz.ch} {\nolinkurl{nina.kamcev@math.ethz.ch}}.}}
 \date{}
\begin{document}
    \maketitle
    
    \begin{abstract}
    The Hales--Jewett theorem states that for any $m$ and $r$ there exists an $n$ such that any $r$-colouring of the elements of $[m]^n$ contains a monochromatic combinatorial line. We study the structure of the wildcard set $S \subseteq [n]$ which determines this monochromatic line, showing that when $r$ is odd there are $r$-colourings of $[3]^n$ where the wildcard set of a monochromatic line cannot be the union of fewer than $r$ intervals. This is tight, as for $n$ sufficiently large there are always monochromatic lines whose wildcard set is the union of at most $r$ intervals.
    \end{abstract}
    
    \section{Introduction} \label{sec:intro}
    
        The Hales--Jewett theorem~\cite{HJ63} is one of the central results in Ramsey theory. Quoting Graham, Rothschild and Spencer~\cite{GRS90}, it ``strips van der Waerden's theorem of its unessential elements and reveals the heart of Ramsey theory. It provides a focal point from which many results can be derived and acts as a cornerstone for much of the more advanced work."
        
        Stating the theorem requires some notation. Given natural numbers $m$ and $n$, let $[m]^n$ be the collection of all $n$-letter words, where each letter is taken from the alphabet $[m] = \{1, 2, ..., m\}$. Given a word $w$ from $[m]^n$, a subset $S$ of $[n]$ and an element $i$ of $[m]$, let $w(S,i)$ be the word obtained from $w$ by replacing the $j$th letter with $i$ for all $j$ in $S$. A \emph{combinatorial line} in $[m]^n$  with \emph{wildcard set} $S \neq \emptyset$ is then a subset of the form $\left \{w(S, 1), w(S, 2), \dots, w(S, m)\right \}$. 
        
        \begin{HJ}
For any natural numbers $m$ and $r$, there exists a natural number $n$ such that any $r$-colouring of the elements of $[m]^n$ contains a monochromatic combinatorial line.
\end{HJ}

        For $m = 2$, the Hales--Jewett theorem is simple to prove. Consider all sequences of length $r$ of the form $11\dots 122\dots 2$, that is, a string of $1$s followed by a string of $2$s. Since there are $r+1$ different sequences, the pigeonhole principle implies that two of them must receive the same colour. If the first of these sequences switches from $1$s to $2$s after the $i$th letter and the second switches after the $j$th letter with $j > i$, then these two sequences form a monochromatic combinatorial line whose wildcard set is the interval $[i+1, j]$.
    
    Given a word $w$ from $[m]^n$, disjoint subsets $S_1, \dots, S_q$ of $[n]$ and elements $i_1, \dots, i_q$ of $[m]$, let $w(S_1,i_1; \dots; S_q, i_q)$ be the word obtained from $w$ by replacing the $j$th letter with $i_k$ if $j$ is in $S_k$. For $m = 3$, the first step in Shelah's celebrated proof of the Hales--Jewett theorem~\cite{S88} is to show that for $n$ sufficiently large there is a word $w \in [3]^n$ and disjoint intervals $S_1, \dots, S_r$ of $[n]$ such that for any $T \subseteq [r]$ and any $i_1, \dots, i_r \in [3]$, the word $w(S_1, j'_1; \dots; S_r, j'_r)$ obtained by letting $j'_t = 2$ for all $t \in T$ and $j'_t = i_t$ for all $t \notin T$ has the same colour as the word $w(S_1, j''_1; \dots; S_r, j''_r)$ defined analogously by letting $j''_t = 3$ for all $t \in T$ and $j''_t = i_t$ for all $t \notin T$. That is, regardless of how the intervals $S_1, \dots, S_r$ are filled, we may switch the label on any subset of the intervals from $2$ to $3$ without changing the colour of the word. 
    
    To complete the proof, we consider the $r$-colouring of $[2]^r$ where the word $v = v(1) \dots v(r)$ receives the colour of the word $w(S_1, v(1); \dots; S_r, v(r))$. By the $m = 2$ case of the theorem, there is a monochromatic combinatorial line determined by a wildcard set $T \subseteq [r]$. This implies that there are $i_1, \dots, i_r \in [2]$ such that the word $w(S_1, j_1; \dots; S_r, j_r)$ with $j_t = 1$ for all $t \in T$ and $j_t = i_t$ for all $t \notin T$ has the same colour as the word $w(S_1, j'_1; \dots; S_r, j'_r)$ with $j'_t = 2$ for all $t \in T$ and $j'_t = i_t$ for all $t \notin T$. But we already know that this latter word has the same colour as the word $w(S_1, j''_1; \dots; S_r, j''_r)$ with $j''_t = 3$ for all $t \in T$ and $j''_t = i_t$ for all $t \notin T$. Therefore, we have a monochromatic combinatorial line with wildcard set $S = \cup_{t \in T} S_t$.
    
    In particular, this proof shows that it is possible to find monochromatic combinatorial lines in $[3]^n$ where the wildcard set has a comparatively simple structure - it is the union of at most $r$ intervals. The main result of this note says that there are situations where one can do no better, suggesting that the proof strategy described above is, at least in some sense, necessary. 
         
    \begin{THM} For any $n$ and any odd $r>1$, there is an $r$-colouring of $[3]^n$ containing no monochromatic combinatorial line whose wildcard set is the union of fewer than $r$ intervals.
    \end{THM}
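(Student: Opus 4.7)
The plan is to construct, for each $n$ and each odd $r>1$, an explicit $r$-colouring $c\colon[3]^n\to\mathbb{Z}/r$ whose monochromatic combinatorial lines necessarily have wildcard sets consisting of at least $r$ intervals.

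I would look for a colouring of the bilinear form
\[
c(w)\;\equiv\;\sum_{i=1}^{n-1} w_i\,w_{i+1}\,\lambda_i\pmod{r}
\]
for carefully chosen weights $\lambda_i\in\mathbb{Z}/r$.  Purely linear colourings $c(w)=\sum_i\mu_i f(w_i)$ are too weak, since they reduce monochromaticity on a line to a single congruence $\sum_{i\in S}\mu_i\equiv 0\pmod r$, which for large $n$ is satisfied by many single-interval wildcards; incorporating pairs of consecutive letters is needed so that the colour also probes the values at the boundary of the wildcard set.

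For a wildcard set $S=\bigsqcup_{\ell=1}^{k}[a_\ell,b_\ell]$, I would split the sum defining $c(w(S,j))$ according to whether a pair $(i,i+1)$ lies entirely inside a single interval of $S$, entirely outside $S$, or straddles one of the at most $2k$ endpoints of an interval.  A short bookkeeping gives
\[
c(w(S,j))\;\equiv\;D(S)\,j^2 + \Psi(w,S)\,j + K\pmod r,
\]
where $D(S)=\sum_\ell\sum_{i=a_\ell}^{b_\ell-1}\lambda_i$ depends only on $S$ and on the $\lambda_i$, and $\Psi(w,S)$ is a $\mathbb{Z}/r$-linear combination of the boundary letters of the stem, with coefficients $\lambda_{a_\ell-1}$ and $\lambda_{b_\ell}$.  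Monochromaticity at $j=1,2,3$ forces the polynomial $D(S) X^2+\Psi(w,S) X$ to take the same value at the three points $X\in\{1,2,3\}$; since $r$ is odd, the Vandermonde determinant of $\{1,2,3\}$ is invertible modulo each prime divisor of $r$, and polynomial interpolation then forces both $D(S)\equiv 0$ and $\Psi(w,S)\equiv 0\pmod r$.

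The remaining task is to choose the weights $\lambda_i$ so that, whenever $S$ has fewer than $r$ intervals, these two vanishings cannot hold simultaneously for any stem.  The delicate case is when $S$ consists entirely of singleton intervals: then $D(S)=0$ holds automatically, so $\Psi(w,S)\not\equiv 0$ must be secured by the $\lambda_i$ themselves, which translates into a Sidon-type requirement that no sum $\sum_q\phi_q\lambda_q$, with $\phi_q\in\{1,2,3\}$ and $q$ ranging over an interlaced multiset of at most $2(r-1)$ boundary positions, vanishes modulo $r$.  I would construct such $\lambda_i$ greedily along $i=1,\ldots,n$, at each step selecting $\lambda_i$ to avoid the finitely many residues that would create one of these forbidden vanishings.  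The main obstacle is verifying that at every step the number of forbidden residues is strictly less than $r$, so that the greedy procedure never runs out of options; this is precisely where the bound $k<r$ enters, since it limits the number of terms in each forbidden linear combination.
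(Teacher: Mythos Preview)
Your quadratic-in-$j$ reduction is correct: with $c(w)=\sum_i w_iw_{i+1}\lambda_i$, monochromaticity of $(w(S,1),w(S,2),w(S,3))$ and the oddness of $r$ do force $D(S)\equiv 0$ and $\Psi(w,S)\equiv 0$. The gap is in the construction of the weights. Already for a singleton wildcard $S=\{a\}$ with $1<a<n$ one has $D(S)=0$ automatically and $\Psi(w,S)=w_{a-1}\lambda_{a-1}+w_{a+1}\lambda_a$. If $3\mid r$ (in particular for the first case $r=3$), choosing the stem with $w_{a-1}=w_{a+1}=3$ gives $\Psi\equiv 0$ for \emph{every} choice of $\lambda$; so the proposed colouring admits a monochromatic line with a single-interval wildcard no matter what weights you pick. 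Thus the scheme cannot work as stated.

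Even when $\gcd(3,r)=1$ the greedy step fails. For $r=5$ and any $\lambda_{a-1}\ne 0$, the nine forbidden values $-w_{a-1}\lambda_{a-1}w_{a+1}^{-1}$ with $w_{a-1},w_{a+1}\in\{1,2,3\}$ cover all of $\{1,2,3,4\}$, forcing $\lambda_a=0$; iterating, most $\lambda_i$ are forced to $0$, and then a singleton $S$ sitting between two zero weights again has $\Psi\equiv 0$. The underlying difficulty is that the coefficients in $\Psi$ are the stem letters themselves, over which the adversary (not you) has full control, so ``fewer than $r$ forbidden residues'' is simply false. The paper sidesteps all of this by abandoning weights entirely: it first contracts each maximal constant run of $w$ to a single letter and then takes a fixed $\mathbb{Z}_r$-valued letter sum (with $t_1=t_3=2$, $t_2=-1$), padding $w$ with $1$'s at both ends. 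The contraction makes the contribution of each interval of $S$ depend only on the two neighbouring stem letters, and a short case check shows that $T^+(x_1)+T^+(x_3)-2T^+(x_2)=2q$ for any line whose wildcard has $q$ intervals; since $r$ is odd this forces $q\ge r$. The key idea you are missing is that one should make the colour see only the \emph{pattern of changes} in $w$, not the raw products $w_iw_{i+1}$.
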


    \section{The proof} \label{sec:proof}

    Write $\mathbb{Z}_r$ for the cyclic group with $r$ elements. Fix a vector $t = (t_1, t_2, t_3) \in \mathbb{Z}_r^3$ and, for a word $w \in [3]^n$, let $T'(w) = \sum_{j \in [n]} t_{w(j)}$. In words, $t$ assigns a weight to each letter in $[3]$ and $T'(w)$ is then the sum of the weights over all letters of $w$, where the sum is taken modulo $r$. Let the word $\overline{w}$ be obtained from $w$ by contracting each interval on which $w$ is constant to a single letter. Set $T(w) = T'(\overline{w})$. Finally, we construct the word $w^+$ by inserting a letter 1 at the start and end of $w$. Our colouring of $[3]^n$ will be $T^+(w) = T(w^+)$. For example, for $w= 11122133$, we have $w^+ = 1111221331$,  $\overline{w^+} = 12131$, and $T^+(w)= T(w^+)= t_1+t_2+t_1+t_3+t_1 $. We claim that for $t_1 = t_3 = 2$ and $t_2 = -1$, the colouring $T^+: [3]^n \to \mathbb{Z}_r$ contains no monochromatic combinatorial line whose wildcard set is the union of fewer than $r$ intervals. 
    
	Let us introduce some more notation. 
	Consider a combinatorial line $(x_1, x_2, x_3)$ with $x_i = w(S,i)$, where $S$ is a union of $q$ disjoint non-consecutive intervals in $[n]$. Although any word that coincides with $x_i$ outside the set $S$ can be chosen as the representative $w$, let us set $w = x_1$ to avoid ambiguity. Outside $S$, the word $w^+$ consists of a collection of non-empty subwords $w_0,\, w_1, \dots, w_{q}$, where $w_{j-1}$ precedes $w_j$ for all $j = 1, \dots, q$. In other words, $w_j$ is the subword of $w^+$ lying between the $j^{th}$ and $(j+1)^{th}$ intervals in $S$. Note that the subwords $w_0$ and $w_q$ are non-empty by the construction of $w^+$.  Denote the first letter of $w_j$ by $f_j$ and the last letter by $\ell_{j+1}$ (such an indexing will be more convenient below).
    We now show that the difference $T(x_i^+) - (T(w_0) + \dots + T(w_{q}))$ depends only on $i$ and the letters $\ell_j, f_j$. 
 
    \begin{CLAIM}  For any $t_1, t_2, t_3 \in \mathbb{Z}_r$ and $i \in [3]$,
    \begin{equation} \label{eq:Tchange}
        T^+(x_i) = T(x_i^+) =  T(w_0) + h_i(\ell_1, f_1)+ T(w_1) + h_i(\ell_2, f_2) + \dots + h_i(\ell_{q}, f_{q}) + T(w_{q}),
        \end{equation} where the $h_i(\ell, f)$ are $\mathbb{Z}_r$-valued functions  specified in the proof.
    \end{CLAIM}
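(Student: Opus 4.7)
The plan is to verify (\ref{eq:Tchange}) by directly analysing the contracted word $\overline{x_i^+}$. Writing $x_i^+$ as the concatenation $w_0 \cdot I_i \cdot w_1 \cdot I_i \cdots I_i \cdot w_q$, where $I_i$ denotes the $i$-block obtained from an interval of $S$, observe that contraction is a local operation: each $w_j$ contracts to $\overline{w_j}$ and each $I_i$ contracts to a single letter $i$, so $\overline{x_i^+}$ is obtained by concatenating these pieces and then merging any equal letters that meet at a boundary. Crucially, the interior letters of each $\overline{w_j}$ are untouched by these merges, so their contribution to $T(x_i^+)$ is exactly $T(w_j)$; it only remains to track how the $q$ inserted $i$'s interact with the boundary letters $\ell_j$ and $f_j$.

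The main step is a case analysis at each $j \in \{1, \ldots, q\}$ based on whether $\ell_j = i$ and whether $f_j = i$. If neither equals $i$, the inserted $i$ is a new maximal run contributing $t_i$. If exactly one of them equals $i$, the inserted $i$ merges with that side and is already accounted for by the corresponding $t_i$ in $T(w_{j-1})$ or $T(w_j)$, so no further contribution is needed. If both equal $i$, then $\ell_j$, the inserted $i$, and $f_j$ collapse into a single letter of $\overline{x_i^+}$; since $T(w_{j-1}) + T(w_j)$ has counted $t_i$ twice at this boundary, we must subtract $t_i$. Setting
\[
h_i(\ell, f) = t_i \bigl(1 - [\ell = i] - [f = i]\bigr),
\]
where $[\,\cdot\,]$ denotes the Iverson bracket, captures all three cases uniformly, and summing the contributions over $j$ yields (\ref{eq:Tchange}).

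I do not anticipate any serious obstacle, as the argument is essentially careful bookkeeping. The one subtlety worth checking is the case where several consecutive boundary merges chain together: for instance, if $\ell_j, f_j, \ell_{j+1}, f_{j+1}$ all equal $i$ and $w_j$ is the single letter $i$, then a whole stretch of $i$'s spanning two intervals of $S$ collapses to a single letter in $\overline{x_i^+}$. The local formula above still gives the right total because each surviving $i$ in $\overline{x_i^+}$ is counted exactly once across the $T(w_k)$'s, with any over-counting cancelled by the $-t_i$ corrections contributed by the neighbouring $h_i$ terms.
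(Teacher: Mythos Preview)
Your argument is correct and arrives at exactly the same functions $h_i$ as the paper, though packaged differently. The paper first works out the $q=1$ case by a six-way case analysis on $(\ell,f)$, recording the values of $h_i(\ell,f)$ in a table, and then reaches general $q$ by induction, peeling off one interval at a time via $T(y_i)=T(w_0)+h_i(\ell_1,f_1)+T(w_1 i w_2\cdots i w_q)$. You instead treat all $q$ boundaries simultaneously and extract the closed form $h_i(\ell,f)=t_i\bigl(1-[\ell=i]-[f=i]\bigr)$, which reproduces the paper's table entry by entry. Your direct approach is arguably cleaner, and the Iverson-bracket formula makes the subsequent verification that $h_1+h_3-2h_2\equiv 2$ a one-line computation rather than a column-by-column check; the paper's induction, on the other hand, sidesteps any need to discuss the chaining subtlety you flag, since the inductive hypothesis absorbs it automatically. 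Your handling of that subtlety is sound but could be phrased more crisply: the point is that in the concatenation $\overline{w_0}\,i\,\overline{w_1}\cdots i\,\overline{w_q}$ the only adjacent equal pairs occur at the $2q$ junction positions (each $\overline{w_j}$ being already reduced), and $T'$ drops by exactly $t_i$ for each such pair, which is precisely what your formula subtracts.
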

    
    \begin{proof}
        We write $y_i = x_i^+$ for $i \in [3]$ so that the identity \eqref{eq:Tchange} is just a statement about how $T$ can be computed from the decomposition of the word $y_i$. Let us first take $q= 1$. There are essentially two cases. Firstly, suppose $\ell_1 \neq f_1$. For concreteness, we take $\ell_1 = 1$, $f_1 = 2$. Then $\overline{y_1} = \overline{y_2} = \overline{w_0} \,\overline{ w_1}$ and, therefore, $T(y_1) = T(y_2) = T(w_0) + T(w_1)$ and $h_1(1, 2) = h_2(1, 2)  = 0$. Moreover, $h_3(1, 2) = t_3$. 
        Suppose now that $\ell_1 = f_1$ and consider the special case $\ell_1 = f_1= 1$. For a word $u$ ending in $1$, let $u\setminus 1$ be the word obtained from $u$ by removing its final letter. Then $\overline{y_1} = \left(\overline{w_0}\setminus 1\right)\, \overline{w_1}$. Hence, $h_1 (1, 1) = T(y_1)-T(w_0)-T(w_1) = -t_1$. Moreover, $h_i(1, 1) = t_i$ for $i = 2, 3$. Since $h_i\left(\ell, f\right) = h_i\left(f, \ell \right)$,  all possible cases are summarised in the following table:
    \begin{center}
        \begin{tabular}{c|cccccc}
        \hline
            $(\ell, f )$ & $(1, 1)$ & $(2, 2)$ & $(3, 3)$ & $(2, 3 )$  & $(3,1 )$ &  $(1,2 )$ \\
        \hline 
            $h_1(\ell,f)$  &  $-t_1$ & $t_1$   & $t_1$   & $t_1$   & $0$ &   $0$\\
           $h_2(\ell, f)$ &  $t_2$  & $-t_2$   & $t_2$   & $0$    & $t_2$ &   $0$\\
           $h_3(\ell, f)$ &   $t_3$ & $t_3$   & $-t_3$   & $0$   & $0$ &   $t_3$\\
           \hline
        \end{tabular}
    \end{center}
    
    The general case now follows by a simple induction. Indeed, suppose that \eqref{eq:Tchange} holds for $q - 1$. We will verify that it also holds for $q$. By the $q = 1$ case discussed above, 
    \begin{equation} \label{eq:induction}
         T(y_i) = T(w_0) + h_i(l_1, f_1) + T(w_1 i w_2  \dots i w_q).
    \end{equation}     
   Since  $w_1$ is non-empty, we can apply the induction hypothesis to the term $T(w_1 i w_2  \dots i w_q)$, which completes the proof. A careful reader may notice that in \eqref{eq:induction}, the intervals of the wildcard set $S$ have been replaced by a single letter $i$, which just facilitates the notation and makes no difference since $T$ is computed from a contraction of the word.
    \end{proof}
        
       	Suppose now that $t_1= t_3 = 2$, $t_2 = -1$ and there is a combinatorial line $(x_1, x_2, x_3)$ such that $T^+(x_1) = T^+(x_2) = T^+(x_3)$. Then, for $i \in \{ 1, 3\}$,  
	$$0 = T^+(x_i) - T^+(x_2) = \sum_{j=1}^q \left( h_i(\ell_j, f_j) - h_2(\ell_j, f_j) \right ).$$ 
	In particular, summing these two equalities, 
	$$0 = T^+(x_1)+T^+(x_3)- 2T^+(x_2) = \sum_{j=1}^q  \left( h_1(\ell_j, f_j) + h_3(\ell_j, f_j) - 2h_2(\ell_j, f_j) \right ).$$
But we can verify that with $t_1 = t_3 = -2t_2 = 2$, for each $\ell$ and $f$ we have $h_1(\ell, f) + h_3(\ell, f) - 2h_2(\ell, f) = 2$, so $ 0 = T^+(x_1)+T^+(x_3)- 2T^+(x_2) = 2q$. 
Since $r$ is odd, $2q = 0$ in $\mathbb{Z}_r$ implies $q \geq r$, as required.

\section{Further remarks} \label{sec:remarks}

For even $r$, our result implies that for any $n$ there is an $r$-colouring of $[3]^n$ containing no monochromatic combinatorial line whose wildcard set is the union of fewer than $r-1$ intervals. It remains to decide whether this can be improved to $r$ intervals. In an earlier version of this paper, we explicitly conjectured that this was possible when $r = 2$. Surprisingly, this conjecture was shown to be false by Leader and R\"aty~\cite{LR18}. That is, for $n$ sufficiently large, every two-colouring of $[3]^n$ contains a monochromatic combinatorial line whose wildcard set is a single interval. It would be very interesting to extend this result to all even $r$.

Given $m$ and $r$, let HJ$(m,r)$ be the smallest dimension $n$ such that every $r$-colouring of $[m]^n$ contains a monochromatic combinatorial line. By following Shelah's proof of the Hales--Jewett theorem~\cite{S88}, one can show that for $n$ sufficiently large depending on $m$ and $r$ there is a monochromatic combinatorial line in $[m]^n$ whose wildcard set is the union of at most HJ$(m-1,r)$ intervals. In our earlier draft, we conjectured that this was always best possible. Unfortunately, the result of Leader and R\"aty shows that this conjecture fails already for $m = 3$ and $r = 2$. As such, we have decided to retract our statement. However, even a marginal improvement on either the upper or lower bound would be interesting.

\begin{QUE}
Do there exist $m \geq 4$, $r \geq 2$ and $c > 1$ such that there are $r$-colourings of $[m]^n$ containing no monochromatic combinatorial line whose wildcard set is the union of at most $c r$ intervals?\\
\end{QUE}

\vspace{-8mm}

\begin{QUE}
For which $m \geq 3$ and $r \geq 2$ is it true that for $n$ sufficiently large, any $r$-colouring of $[m]^n$ contains a monochromatic combinatorial line whose wildcard set is the union of at most $\mathrm{HJ}(m-1, r)-1$ intervals?
\end{QUE}

\vspace{2mm}
\noindent \textbf{Acknowledgement.} The second author would like to thank Sarah Gales, Hannah and Sven Eggimann for hosting her in Oxford while this research was conducted. We would also like to thank the anonymous referee for a number of helpful remarks.

\end{document}